\documentclass[12pt]{article}
\usepackage{authblk}
\usepackage{amsmath,amssymb,amsfonts,latexsym,amsthm,enumerate,color,cite}
\usepackage[colorlinks=true, citecolor=blue, linkcolor=blue, urlcolor=blue]{hyperref}

\newtheorem{formula}{}[section]

\newtheorem{corollary}[formula]{Corollary}
\newtheorem{lemma}[formula]{Lemma}
\newtheorem{theorem}[formula]{Theorem}

\title{Structure of $k$-closures of finite nilpotent permutation groups}
\author{Dmitry Churikov}
\affil{Sobolev Institule of Mathematics, Novosibirsk, Russia}
\affil{Novosibirsk State University, Novosibirsk, Russia}

\begin{document}
\maketitle

\begin{abstract}
Let $G$ be a permutation group on a set $\Omega$, and $k$ a positive integer. The $k$-closure $G^{(k)}$ of $G$ is the largest subgroup of $\operatorname{Sym}(\Omega)$, with the same as $G$ orbits of componentwise action on $\Omega^k$. We prove that the $k$-closure of a finite nilpotent permutation group is the direct product of $k$-closures of its Sylow subgroups.
\end{abstract}

\section{Itroduction}

Let $G$ be a permutation group on a finite nonempty set $\Omega$, and $k$ a positive integer. Denote by $\operatorname{Orb}_k(G)$ the set of the orbits of componentwise action of $G$ on the Cartesian power $\Omega^k$. Elements of the set $\operatorname{Orb}_k(G)$ are called $k$-orbits of $G$, and they form a partition of the set~$\Omega^k$. The $k$-closure $G^{(k)}$ of~$G$ is the automorphism group of $k$-orbits of $G$~\cite[Def. 5.3]{WieInvRel},
$$
G^{(k)}=\{g\in\operatorname{Sym}(\Omega)\mid O^g=O~\forall O\in\operatorname{Orb}_k(G)\}.
$$
Equivalently, $G^{(k)}$ is the largest subgroup of the symmetric group $\operatorname{Sym}(\Omega)$ such that $\operatorname{Orb}_k(G)=\operatorname{Orb}_k(G^{(k)})$.
The following inclusions for closures of $G$ are known~\cite[Thm.~5.8]{WieInvRel}:
\begin{equation}\label{ClosureSeries}
G\leq G^{(k)}\leq G^{(k-1)}, k\geq 2.
\end{equation}
The main result of this paper establishes a simple link between the $k$-closure of a finite nilpotent permutation group and $k$-closures of its Sylow subgroups.

\begin{theorem}\label{main1}
If $G$ is a finite nilpotent permutation group, and $k\geq 2$, then
$$
G^{(k)}=\prod_{P\in\operatorname{Syl}(G)} P^{(k)},
$$
i.e. the $k$-closure of $G$ is the direct product of $k$-closures of its Sylow subgroups. In particular, $G^{(k)}$ is nilpotent.
\end{theorem}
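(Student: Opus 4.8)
The plan is to reduce the statement to the case of two coprime factors and to prove a splitting lemma that exploits a ``grid'' structure coming from coprimeness. Since $G$ is nilpotent, $G=\prod_{i=1}^{t}P_i$ is the internal direct product of its Sylow subgroups $P_i$, which pairwise commute and have pairwise coprime orders, and each $P_i\le G$. I would first record the monotonicity of closures: if $H\le G$ then $\Orb_k(H)$ refines $\Orb_k(G)$, so each $k$-orbit of $G$ is a union of $k$-orbits of $H$ and is therefore fixed setwise by every element of $H^{(k)}$; hence $H^{(k)}\le G^{(k)}$. In particular $P_i^{(k)}\le G^{(k)}$ for all $i$, which already gives $\prod_i P_i^{(k)}\subseteq G^{(k)}$ as a set. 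Everything then reduces, by induction on $t$ via $G=P_1\times(P_2\cdots P_t)$, to the following splitting lemma: if $A,B\le\Sym(\Omega)$ commute elementwise, satisfy $A\cap B=1$ and $\gcd(|A|,|B|)=1$, and $G=A\times B$, then $G^{(k)}=A^{(k)}\times B^{(k)}$ for $k\ge2$.

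Two facts drive the lemma. First, because $k\ge2$ we have $G^{(k)}\le G^{(2)}$, so elements of $G^{(k)}$ fix every $2$-orbit of $G$, hence every $G$-invariant binary relation. Since $B\trianglelefteq G$, the relation ``$\beta\in\alpha^{B}$'' is $G$-invariant (indeed $A$-invariant), so it is a union of $2$-orbits; consequently every element of $G^{(k)}$, and by the same argument applied to $A$ every element of $A^{(k)}$, preserves the partition of $\Omega$ into $B$-orbits, and symmetrically into $A$-orbits. Second, coprimeness gives a product structure on each $G$-orbit $\Delta$: a subgroup of a direct product of groups of coprime orders splits along the factors, so for $\delta\in\Delta$ one has $G_\delta=A_\delta\times B_\delta$, whence the natural map $A/A_\delta\times B/B_\delta\to\Delta$ is a bijection identifying $\Delta$ with a grid $R\times C$ on which $A$ acts on $R$ and $B$ on $C$; the $A$-orbits are the sets $R\times\{c\}$, the $B$-orbits the sets $\{r\}\times C$, and each meets the other in a single point.

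The heart of the argument, and the step I expect to be the main obstacle, is to realise an arbitrary $g\in G^{(k)}$ as a product of elements of $A^{(k)}$ and $B^{(k)}$. Fix $g\in G^{(k)}$. Since $g\in G^{(1)}$ fixes each $G$-orbit $\Delta$ setwise and preserves both coordinate partitions, on $\Delta\cong R\times C$ it must act as a product $\sigma_\Delta\times\tau_\Delta$. Define $a,b\in\Sym(\Omega)$ by $a|_\Delta=\sigma_\Delta\times\mathrm{id}$ and $b|_\Delta=\mathrm{id}\times\tau_\Delta$; these are canonical (independent of the grid identification) and satisfy $g=ab=ba$. To see $a\in A^{(k)}$, take any $\bar\delta=(\delta_1,\dots,\delta_k)$; as $g\in G^{(k)}$ there is $x=a_0b_0\in G$ with $\bar\delta^{\,g}=\bar\delta^{\,x}$. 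Writing $\delta_i=(r_i,c_i)$ in the grid of its orbit and comparing both sides coordinatewise forces $\sigma_{\Delta_i}r_i=r_i\cdot a_0$, hence $\delta_i^{\,a}=\delta_i^{\,a_0}$ for every $i$ with the one fixed element $a_0\in A$ serving all coordinates; thus $\bar\delta^{\,a}=\bar\delta^{\,a_0}$ lies in the $A$-orbit of $\bar\delta$. As $\bar\delta$ was arbitrary, $a\in A^{(k)}$, and symmetrically $b\in B^{(k)}$, giving $G^{(k)}\subseteq A^{(k)}B^{(k)}$.

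Finally I would assemble the direct product. Combining $A^{(k)}\le A^{(1)}$ (which fixes every $A$-orbit setwise) with the partition-preservation from the second paragraph, every element of $A^{(k)}$ acts on each grid $R\times C$ as $\psi\times\mathrm{id}_C$, i.e.\ trivially on the $C$-coordinate, while every element of $B^{(k)}$ acts as $\mathrm{id}_R\times\chi$; such elements commute and intersect only in the identity, so $G^{(k)}=A^{(k)}\times B^{(k)}$. Induction on $t$ then yields $G^{(k)}=\prod_{P\in\Syl(G)}P^{(k)}$. For the concluding assertion that $G^{(k)}$ is nilpotent, I would invoke the fact that for $k\ge2$ the $k$-closure of a $p$-group is again a $p$-group: each $P^{(k)}$ is then a group of prime-power order, and a direct product of such groups is nilpotent.
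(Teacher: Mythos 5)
Your proof is correct, and it takes a genuinely different route from the paper's. The paper splits into two cases: for transitive $G$ it builds a global permutation isomorphism $\Omega\cong\operatorname{Orb}(H)\times\operatorname{Orb}(P)$ and reduces to a lemma on $k$-closures of external direct products acting on $\Omega_1\times\Omega_2$; for intransitive $G$ it first shows $\pi(G^{(k)})=\pi(G)$ via transitive constituents, then proves that each Sylow subgroup $Q$ of $G^{(k)}$ equals $P^{(k)}$, using an orbit-matching lemma and a technical lemma on setwise stabilizers $\bigl(\bigcap_i G_{\{\Delta_i\}}\bigr)^\Delta\le P^\Delta$. You instead prove a single internal splitting lemma for coprime commuting factors $A,B\le\operatorname{Sym}(\Omega)$, working orbit-by-orbit: coprimeness gives $G_\delta=A_\delta\times B_\delta$ and hence a local grid on each $G$-orbit, $k\ge2$ gives preservation of the two coordinate partitions, and you factor each $g\in G^{(k)}$ explicitly as $g=ab$ with membership $a\in A^{(k)}$ certified by extracting the $A$-component $a_0$ of the Wielandt witness $x=a_0b_0$. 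The common kernel of both arguments is the same (the coprime grid structure plus replacing the witness $h\in G$ by its $p$-part, which is exactly what Lemma~\ref{SetwiseStabilizers} accomplishes in the paper), but your version is more uniform: it needs no transitive/intransitive dichotomy, no analysis of Sylow subgroups of $G^{(k)}$, and it delivers the direct-product structure of $A^{(k)}B^{(k)}$ immediately from the coordinate description $\psi\times\mathrm{id}$ and $\mathrm{id}\times\chi$ of the two factors. What the paper's route buys in exchange is a collection of independently reusable lemmas (the two external product lemmas and the setwise-stabilizer lemma) and the additional structural information that $P^{(k)}$ is literally a Sylow subgroup of $G^{(k)}$. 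Both proofs rest on the same two external inputs: Wielandt's criterion (Theorem~\ref{W}) and the fact that the $k$-closure of a $p$-group is a $p$-group for $k\ge2$.
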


A group $G$ is $k$-closed if $G^{(k)}=G$. Theorem~\ref{main1} implies necessary and sufficient conditions for a nilpotent group to be $k$-closed.

\begin{corollary} A finite nilpotent permutation group is $k$-closed if and only if every Sylow subgroup of $G$ is $k$-closed.
\end{corollary}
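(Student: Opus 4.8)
The plan is to reduce the statement to the case of two coprime direct factors and then split each element of $G^{(k)}$ into its two ``coordinate parts''. Write $G=P_1\times\cdots\times P_r$ as the internal direct product of its Sylow subgroups. Since every $k$-orbit of $G$ is a union of $k$-orbits of any subgroup $H\le G$, it is invariant under $H^{(k)}$; thus the $k$-closure is monotone, and in particular $P_i^{(k)}\le G^{(k)}$ for every $i$. Grouping the factors as $A=P_1$ and $B=P_2\cdots P_r$, which have coprime orders, I would prove the two-factor identity $G^{(k)}=A^{(k)}\times B^{(k)}$ and then induct on the number of primes dividing $|G|$.

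The structural input is that in a finite nilpotent group every subgroup is the direct product of its intersections with the Sylow subgroups. Applied to the point stabiliser this gives $G_\omega=A_\omega\times B_\omega$, so each $G$-orbit $\Delta$ carries the structure of a grid $\Delta\cong (A/A_\omega)\times(B/B_\omega)$ on which $A$ moves only the first coordinate and $B$ only the second. Two consequences will be used: (i) ``$\alpha,\beta$ lie in the same $A$-orbit'' is exactly the relation ``$\alpha,\beta$ have equal second coordinate'' (and symmetrically for $B$); and (ii) for $u,v$ in one $G$-orbit the intersection $u^{A}\cap v^{B}$ is a single point.

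For the two-factor identity, take $g\in G^{(k)}$. Because $k\ge 2$, inclusion \eqref{ClosureSeries} gives $g\in G^{(2)}$, so $g$ preserves every $2$-orbit of $G$, and in particular the two relations in (i), each of which is a union of $2$-orbits. By (i) this forces $g$ to act on every grid in product form $g(x_{A},x_{B})=(\sigma(x_{A}),\tau(x_{B}))$; let $a,b\in\Sym(\Omega)$ be the resulting coordinate parts (so $a$ acts by $\sigma$ on first coordinates and trivially on second, and dually for $b$), whence $g=ab=ba$. To see that $a\in A^{(k)}$, fix any $\bar\alpha=(\alpha_1,\dots,\alpha_k)\in\Omega^{k}$: since $g$ fixes the $k$-orbit $\bar\alpha^{G}$ setwise, we have $\bar\alpha^{g}=\bar\alpha^{h}$ for a \emph{single} $h=\alpha_0\beta_0\in A\times B$, and comparing first coordinates (which $\beta_0$ leaves fixed) yields $a(\alpha_j)=\alpha_j^{\alpha_0}$ simultaneously for all $j$. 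Hence $\bar\alpha^{a}=\bar\alpha^{\alpha_0}$ lies in the same $A$-orbit as $\bar\alpha$, so $a$ fixes every $k$-orbit of $A$, i.e. $a\in A^{(k)}$; symmetrically $b\in B^{(k)}$, and therefore $G^{(k)}\le A^{(k)}B^{(k)}$. Conversely $A^{(k)},B^{(k)}\le G^{(k)}$, and an element of $A^{(k)}\le A^{(1)}$ preserves the $A$-orbits, hence by (i) fixes every second coordinate, while $B^{(k)}$ fixes every first coordinate; so the two subgroups commute and meet trivially, giving the internal direct product $G^{(k)}=A^{(k)}\times B^{(k)}$. The single-$h$ step is the heart of the argument and is precisely where the hypothesis $k\ge 2$ enters (through the product form); for $k=1$ it genuinely fails.

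Induction on the number of primes then yields $G^{(k)}=\prod_{P\in\Syl(G)}P^{(k)}$ as an internal direct product. For the final assertion that $G^{(k)}$ is nilpotent, I would use that the $k$-closure of a finite $p$-group is again a $p$-group for $k\ge 2$: this reduces to the transitive constituents through the inclusion $(P^{(k)})^{\Delta}\le (P^{\Delta})^{(k)}$ together with the observation that a subdirect product of $p$-groups is a $p$-group, the transitive case being known. Granting this, the pairwise coprime subgroups $P^{(k)}$ are exactly the Sylow subgroups of $G^{(k)}$, so $G^{(k)}$ is nilpotent, which also yields the Corollary. The main obstacle is thus twofold: carrying out the coordinate splitting cleanly in the (possibly intransitive) two-factor case, and establishing the $p$-group closure fact used only for the nilpotency addendum.
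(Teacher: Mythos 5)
Your proposal does not use Theorem~\ref{main1} as a black box but re-derives it; note that once $G^{(k)}=\prod_{P\in\Syl(G)}P^{(k)}$ is available, the Corollary is immediate (if each $P^{(k)}=P$ then $G^{(k)}=G$; conversely $P\le P^{(k)}$ and the factors have pairwise coprime orders, so $G^{(k)}=G$ forces $P^{(k)}=P$), which is all the paper does. Your re-derivation takes a genuinely different route from the paper in the intransitive case: the paper first settles the transitive case by identifying $G$ with a product action and invoking Lemma~\ref{DirProd}, and then needs Lemmas~\ref{SetwiseStabilizers}, \ref{SylowSubgrpOrbits} and~\ref{SylowSubgrpClosure} to pass to intransitive groups, whereas you run the grid/coordinate-splitting argument uniformly on each $G$-orbit and decompose an arbitrary $g\in G^{(k)}$ globally as $g=ab$ via the single-$h$ trick. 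This part is sound: the relations ``same $A$-orbit'' and ``same $B$-orbit'' are $G$-invariant because $A$ and $B$ are normal, hence unions of $2$-orbits preserved by $G^{(2)}\supseteq G^{(k)}$; and comparing first coordinates does produce a single $\alpha_0\in A$ with $\bar\alpha^{a}=\bar\alpha^{\alpha_0}$, so $a\in A^{(k)}$ by Theorem~\ref{W}. What this buys is a shorter, uniform path that bypasses the setwise-stabilizer lemma entirely.

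There is one step that does not follow as written: from ``$A^{(k)}$ fixes every second coordinate and $B^{(k)}$ fixes every first coordinate'' you conclude that the two subgroups commute, but permutations of a grid of the forms $(x,y)\mapsto(f_y(x),y)$ and $(x,y)\mapsto(x,g_x(y))$ need not commute; fixing complementary coordinates only yields trivial intersection, and $G^{(k)}=A^{(k)}B^{(k)}$ with trivial intersection is not yet a direct product. The conclusion is true and repairable with tools you already have: either observe that $A^{(k)}$ and $B^{(k)}$ are exactly the kernels of the action of $G^{(k)}$ on the set of $A$-orbits and on the set of $B$-orbits respectively (your decomposition $g=ab$ shows each kernel equals the corresponding closure), hence both are normal in $G^{(k)}$ and the product is direct; or note that on each $G$-orbit $\Gamma$ one has $A^\Gamma=\bar A\times 1$ in the product action, so $(A^{(k)})^\Gamma\le(A^\Gamma)^{(k)}=\bar A^{(k)}\times 1$ by Lemma~\ref{DirProd}, i.e. $f_y$ is independent of $y$, and dually for $B$, after which the two parts do commute elementwise. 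With that repair, and granting the $p$-group closure fact you quote for the nilpotency addendum, the argument is complete and the Corollary follows as you indicate.
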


Theorem~\ref{main1} generalizes recent results of~\cite{CP,CPr}, where similar theorems were proved for $2$-closures of nilpotent groups~\cite{CP} and for $k$-closures of abelian groups~\cite{CPr}. It is worth mentioning that nilpotency of $k$-closure of nilpotent group is already known and it easily follows from~\cite[Thm. 1.1]{CP} and the inclusions~(\ref{ClosureSeries}).

\section{Preliminaries}\label{sec:Section2}

We will mainly use standard notations of permutation group theory (see~\cite{Wielandt1964}). We especially note the following ones. If a group $G$ (an element $x\in G$) acts on a set $\Omega$, then $G^\Delta$ ($x^\Delta$ respectively) denotes the permutation group (the permutation respectively) induced by this action. If $G\leq\operatorname{Sym}(\Omega)$, and $\Delta\subseteq\Omega$ is a $G$-invariant subset, then $G_{\{\Delta\}} =\{x\in G\mid\Delta^x=\Delta\}$ is the setwise stabilizer of $\Delta$ in $G$. Finally, $1_\Delta$ denotes the identity element of $\operatorname{Sym}(\Delta)$.

Several times we will use the following criterion for a permutation to belong to a $k$-closure.

\begin{theorem}\label{W}{\rm \cite[Thm. 5.6]{WieInvRel}} If $G\leq \operatorname{Sym}(\Omega)$, $k\geq1$, and $x\in\operatorname{Sym}(\Omega)$, then $x\in G^{(k)}$ if and only if for all $\alpha_1, \dots, \alpha_k\in\Omega$ there exists $g\in G$ such that $\alpha_i^x = \alpha_i^g$, $i=1,\dots,k$.
\end{theorem}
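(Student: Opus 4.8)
The plan is to prove the criterion by directly unwinding the two definitions involved: membership in $G^{(k)}$, defined as setwise stabilization of every $k$-orbit, and the tuple-wise matching condition in the statement. Throughout I abbreviate a tuple by $\bar\alpha=(\alpha_1,\dots,\alpha_k)\in\Omega^k$ and write $\bar\alpha^g=(\alpha_1^g,\dots,\alpha_k^g)$ for the componentwise action. The key observation is that two tuples $\bar\alpha,\bar\beta$ lie in the same element of $\operatorname{Orb}_k(G)$ if and only if there is some $g\in G$ with $\bar\alpha^g=\bar\beta$; this is simply the description of the $G$-orbit of $\bar\alpha$. Consequently, the condition of the theorem --- for all $\alpha_1,\dots,\alpha_k$ there is $g\in G$ with $\alpha_i^x=\alpha_i^g$ --- is literally the assertion that for every $\bar\alpha\in\Omega^k$ the image $\bar\alpha^x$ lies in the same $k$-orbit as $\bar\alpha$.

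With this reformulation the forward implication is immediate: if $x\in G^{(k)}$, then $O^x=O$ for every $O\in\operatorname{Orb}_k(G)$, so for the orbit $O$ containing a given $\bar\alpha$ we get $\bar\alpha^x\in O^x=O$, i.e. $\bar\alpha^x$ and $\bar\alpha$ share the orbit $O$; translating back produces the required $g\in G$. For the converse, the hypothesis says exactly that $\bar\alpha^x$ stays in the orbit of $\bar\alpha$, which yields $O^x\subseteq O$ for every $k$-orbit $O$. It then remains to promote this inclusion to the equality $O^x=O$ demanded by the definition of $G^{(k)}$.

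This promotion is the one point I expect to require care, since the naive reading of the criterion only delivers the inclusion $O^x\subseteq O$. I would close the gap by applying the hypothesis to the tuple $\bar\beta^{\,x^{-1}}$, where $\bar\beta\in O$ is arbitrary: the criterion then places $\bar\beta=(\bar\beta^{\,x^{-1}})^x$ in the same $k$-orbit as $\bar\beta^{\,x^{-1}}$, and since $\bar\beta\in O$ this forces $\bar\beta^{\,x^{-1}}\in O$, whence $\bar\beta\in O^x$. Thus $O\subseteq O^x$ as well, giving $O^x=O$ for every $k$-orbit and hence $x\in G^{(k)}$. Over a finite $\Omega$ one may instead note that the componentwise action of $x$ is an injection of the finite set $O$ into itself, so $O^x\subseteq O$ already forces $O^x=O$; the $x^{-1}$ argument has the advantage of working verbatim for infinite $\Omega$ as well.
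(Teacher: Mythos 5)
Your proof is correct, but there is nothing in the paper to compare it against: the paper states this criterion as Theorem 2.1 with a citation to Wielandt's lecture notes \cite[Thm.~5.6]{WieInvRel} and gives no proof of its own. Your argument is the standard one — reformulating the tuple-matching condition as ``$\bar\alpha^x$ lies in the $G$-orbit of $\bar\alpha$'' and unwinding the definition $G^{(k)}=\{x\in\operatorname{Sym}(\Omega)\mid O^x=O\ \forall O\in\operatorname{Orb}_k(G)\}$ — and you correctly identify and close the only delicate point, namely that the hypothesis directly yields only $O^x\subseteq O$; both of your repairs (applying the hypothesis to $\bar\beta^{\,x^{-1}}$, or using injectivity of $x$ on the finite set $O$) are valid, and the finiteness argument already suffices here since the paper works over a finite $\Omega$.
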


Let $\pi$ be a set of prime numbers, and $n$ is a positive integer. Denote by $n_\pi$ the divisor of $n$ such that $(n_\pi,\frac{n}{n_\pi})=1$ and the set of prime divisors of $n_\pi$ is equal to $\pi$.

\begin{lemma}\label{CP1}{\rm \cite[Lemma 3.1]{CP}}
If $G$ is a finite nilpotent permutation group of degree $n$, and $H$ is a Hall subgroup of $G$, then
\begin{itemize}
    \item[{\em 1)}] the size of every orbit of $H$ is equal to $n_\pi$, where $\pi=\pi(H)$ is the set of prime divisors of $|H|$,
    \item[{\em 2)}] $G$ acts on $\operatorname{Orb}(H)$, and $H$ is the kernel of this action.
\end{itemize}
\end{lemma}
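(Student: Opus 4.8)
The plan is to exploit the two special features of the nilpotent hypothesis: that $H$ is a \emph{normal} Hall subgroup and that $G$ splits as a direct product $G=H\times K$, where $K$ is the complementary (normal) Hall $\pi'$-subgroup with $\pi=\pi(H)$. Indeed, since $G$ is the direct product of its Sylow subgroups, $H$ is the unique Hall $\pi$-subgroup of $G$, equal to the product of the Sylow $p$-subgroups with $p\in\pi$; in particular $H\trianglelefteq G$ and $H$ has a normal complement $K$ of coprime order. Throughout I take $G$ transitive, which is the setting in which the lemma is used; then $n$ is the length of the unique $G$-orbit and every point stabiliser $G_\alpha$ has index $n$.

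For part 1) I would first note that normality of $H$ together with transitivity of $G$ forces all orbits of $H$ to have a common size: since $H\trianglelefteq G$, the group $G$ permutes $\Orb(H)$ transitively, so any two $H$-orbits are conjugate under $G$ and hence equipotent. It therefore suffices to compute the length $m=|\alpha^{H}|=|H:H\cap G_\alpha|$ of a single orbit. The key observation is that $H\cap G_\alpha$ is the Hall $\pi$-subgroup of $G_\alpha$: it is a $\pi$-group, and the embedding $G_\alpha/(H\cap G_\alpha)\hookrightarrow G/H\cong K$ shows that its index in $G_\alpha$ is a $\pi'$-number. Consequently $|H\cap G_\alpha|=|G_\alpha|_\pi$, and
\[
m=\frac{|H|}{|H\cap G_\alpha|}=\frac{|G|_\pi}{|G_\alpha|_\pi}=\Bigl(\frac{|G|}{|G_\alpha|}\Bigr)_{\!\pi}=n_\pi,
\]
which is the assertion of 1).

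For part 2) the action is well defined because $H\trianglelefteq G$: for $g\in G$ and $\alpha\in\Omega$ one has $(\alpha^{H})^{g}=\alpha^{Hg}=\alpha^{gH}=(\alpha^{g})^{H}$, so $g$ sends $H$-orbits to $H$-orbits. Plainly $H$ lies in the kernel, since it fixes each of its own orbits setwise. The substance of 2) is the reverse inclusion, and this is where I expect the only real work. Given $g$ in the kernel, write $g=g_\pi g_{\pi'}$ with $g_\pi\in H$ and $g_{\pi'}\in K$ (possible and unique because $G=H\times K$). As $g_\pi\in H$ already fixes every $H$-orbit, so does $g_{\pi'}=g_\pi^{-1}g$; I then aim to show $g_{\pi'}=1$ by proving it fixes every point. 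Fix $\alpha$: since $g_{\pi'}$ stabilises the orbit $\alpha^{H}$, we have $\alpha^{g_{\pi'}}=\alpha^{h}$ for some $h\in H$, whence $g_{\pi'}h^{-1}\in G_\alpha$. Now $g_{\pi'}$ and $h$ have coprime orders (a $\pi'$-number and a $\pi$-number) and commute, so by the Chinese Remainder Theorem there is an integer $t$ with $(g_{\pi'}h^{-1})^{t}=g_{\pi'}$; hence $g_{\pi'}\in G_\alpha$, i.e. $\alpha^{g_{\pi'}}=\alpha$. As $\alpha$ was arbitrary, $g_{\pi'}=1$ and $g=g_\pi\in H$, proving that the kernel equals $H$.

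The main obstacle is precisely this last extraction step: isolating the $\pi'$-component $g_{\pi'}$ inside the point stabiliser. The coprime-order/CRT trick makes it painless, but it is the point at which the nilpotent (direct-product) structure is genuinely used; without a normal Hall complement one could not separate $g$ into commuting $\pi$- and $\pi'$-parts, and the conclusion $H=\ker$ would fail. Everything else is bookkeeping with Hall subgroups and the orbit--stabiliser relation.
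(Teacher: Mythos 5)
Your proposal cannot be compared against an in-paper argument, because the paper does not prove this lemma at all: it is imported as a black box from \cite[Lemma 3.1]{CP}. Judged on its own, your proof is correct. For part 1), the chain of reductions is sound: $H\trianglelefteq G$ together with transitivity of $G$ makes $G$ transitive on $\operatorname{Orb}(H)$, so all $H$-orbits have a common length $m=|H:H\cap G_\alpha|$; the identification of $H\cap G_\alpha$ as the Hall $\pi$-subgroup of $G_\alpha$ via the embedding $G_\alpha/(H\cap G_\alpha)\hookrightarrow G/H\cong K$ is right; and the computation $m=|G|_\pi/|G_\alpha|_\pi=\bigl(|G|/|G_\alpha|\bigr)_\pi=n_\pi$ is valid since taking $\pi$-parts is multiplicative on divisors. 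For part 2), the decomposition $g=g_\pi g_{\pi'}$ available in $G=H\times K$, and the coprime-order/CRT extraction of $g_{\pi'}$ from the commuting product $g_{\pi'}h^{-1}\in G_\alpha$ (choose $t\equiv 1$ modulo the order of $g_{\pi'}$ and $t\equiv 0$ modulo the order of $h$), correctly force $g_{\pi'}$ to fix every point, hence $g_{\pi'}=1$ by faithfulness. Your closing diagnosis is also accurate: for $G=\operatorname{Sym}(3)$ with $H=\operatorname{Alt}(3)$, a normal Hall $\{3\}$-subgroup whose complement does not centralize it, the kernel of the action on $\operatorname{Orb}(H)$ is all of $G$, so the commuting $\pi$/$\pi'$-splitting is genuinely where nilpotency enters.

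One point you handled by fiat deserves emphasis: you assume $G$ transitive, and you are right to, because part 1) is simply false without it --- take $G=H=\langle(1\,2)\rangle$ acting on $\{1,2,3,4\}$, where $n_2=4$ but $H$ has orbits of sizes $2,1,1$. The statement as reproduced in this paper omits the transitivity hypothesis (it is surely present in the cited source), and every application in the paper is indeed to transitive groups: the transitive case of the main theorem, and the transitive constituents $G^\Gamma$ and $(G^{(k)})^\Gamma$ in Lemma~\ref{SylowSubgrpOrbits}. It is worth noting, as your argument implicitly shows, that part 2) nowhere uses transitivity, so that half of the lemma holds for every faithful nilpotent permutation group.
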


The following two lemmas are well known, but, apparently, they are not completely proved anywhere. We fill this gap. Some variations of these statements can be found in~\cite{3ClosSolv,EP,Elusive}.

\begin{lemma}\label{DirSum}
If $G_i\leq\operatorname{Sym}(\Omega_i)$, $i=1,2$, and the group $G_1\times G_2$ acts on the disjoint union $\Omega_1\cup\Omega_2$, then for all integers $k\geq 1$,
$$(G_1\times G_2)^{(k)} = G_1^{(k)}\times G_2^{(k)}.$$
\end{lemma}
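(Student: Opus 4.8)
The plan is to prove the two inclusions $G_1^{(k)}\times G_2^{(k)}\leq(G_1\times G_2)^{(k)}$ and $(G_1\times G_2)^{(k)}\leq G_1^{(k)}\times G_2^{(k)}$ separately, using the Wielandt criterion (Theorem~\ref{W}) throughout. Here $G_1\times G_2$ acts on $\Omega=\Omega_1\cup\Omega_2$ so that $G_1$ permutes $\Omega_1$ while fixing $\Omega_2$ pointwise, and symmetrically for $G_2$; thus every element decomposes as a product of its restrictions to $\Omega_1$ and $\Omega_2$.

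For the inclusion $G_1^{(k)}\times G_2^{(k)}\leq(G_1\times G_2)^{(k)}$, I would take $x=(x_1,x_2)$ with $x_i\in G_i^{(k)}$ and verify the criterion of Theorem~\ref{W}. Given any $\alpha_1,\dots,\alpha_k\in\Omega$, I split the index set according to whether $\alpha_i\in\Omega_1$ or $\alpha_i\in\Omega_2$. The points lying in $\Omega_1$ form a list of length at most $k$, which I extend to a list of exactly $k$ points of $\Omega_1$ by repetition; since $x_1\in G_1^{(k)}$, Theorem~\ref{W} yields $g_1\in G_1$ agreeing with $x_1$ on all of them. Obtaining $g_2\in G_2$ for the points in $\Omega_2$ in the same way, I set $g=(g_1,g_2)\in G_1\times G_2$ and observe that $g$ agrees with $x$ on every $\alpha_i$, because on $\Omega_1$ the element $g$ acts as $g_1$ and $x$ acts as $x_1$ (and likewise on $\Omega_2$). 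Hence $x\in(G_1\times G_2)^{(k)}$.

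For the reverse inclusion, let $x\in(G_1\times G_2)^{(k)}$. The first step is to show that $x$ stabilizes $\Omega_1$ and $\Omega_2$ setwise. This follows because, by~(\ref{ClosureSeries}), $x$ lies in the $1$-closure and so preserves the orbits of $G_1\times G_2$; since each such orbit is contained entirely in $\Omega_1$ or entirely in $\Omega_2$, the unions $\Omega_1$ and $\Omega_2$ are $x$-invariant. I can then form the restrictions $x_1=x^{\Omega_1}$ and $x_2=x^{\Omega_2}$, with $x=x_1x_2$ in $\Sym(\Omega)$. To see $x_1\in G_1^{(k)}$, take any $\alpha_1,\dots,\alpha_k\in\Omega_1$ and apply Theorem~\ref{W} to $x$ in $\Omega$, producing $g=(g_1,g_2)\in G_1\times G_2$ with $\alpha_i^x=\alpha_i^g$; since all $\alpha_i\in\Omega_1$, this reads $\alpha_i^{x_1}=\alpha_i^{g_1}$, so $x_1\in G_1^{(k)}$ by the criterion, and symmetrically $x_2\in G_2^{(k)}$. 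Thus $x=x_1x_2\in G_1^{(k)}\times G_2^{(k)}$.

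I expect the only genuinely delicate point to be the setwise invariance of $\Omega_1$ and $\Omega_2$ under $x$ at the start of the reverse inclusion; everything else is a direct bookkeeping application of Theorem~\ref{W}, where the sole technical care needed is padding a short list of points up to length $k$ so that the criterion applies verbatim.
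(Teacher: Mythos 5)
Your proof is correct and follows essentially the same route as the paper: both directions are direct applications of Theorem~\ref{W}, splitting the $k$ points according to which $\Omega_i$ they lie in (your padding-by-repetition plays the role of the paper's appeal to $G^{(k)}\leq G^{(l)}$ for $l\leq k$). You even supply the justification for $\Omega_i^x=\Omega_i$ via preservation of $1$-orbits, which the paper asserts without comment.
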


\begin{proof}
$\boxed{\geq}$ It suffices to prove that $G_1^{(k)}\times 1$ and $1\times G_2^{(k)}$ are contained in $(G_1\times G_2)^{(k)}$. Fix a set of elements $\alpha_1,\ldots,\alpha_k\in\Omega_1\cup\Omega_2$ and its subset $\alpha_{j_1},\ldots,\alpha_{j_l}$ consisting of elements from $\Omega_1$. If $(g,1)\in G_1^{(k)}\times 1$, then the inclusions~(\ref{ClosureSeries}) imply that $g\in G_1^{(l)}$, and by Theorem~\ref{W} exists $h\in G_1$ such that $\alpha_{j_i}^g=\alpha_{j_i}^h$ for all $i=1,\ldots,l$.

Now consider the element $(h,1)\in G_1\times G_2$. By construction for all $i=1,\ldots,k$
\[
\alpha_i^{(h,1)}=\begin{cases}
\alpha_i^h=\alpha_i^g=\alpha_i^{(g,1)} \text{ if } \alpha_i\in\Omega_1,\\
\alpha_i=\alpha_i^{(g,1)} \text{ if } \alpha_i\in\Omega_2.
\end{cases}
\]
It follows from Theorem~\ref{W} that $(g,1)\in (G_1 \times G_2)^{(k)}$. So that, $G_1^{(k)}\times 1\leq (G_1\times G_2)^{(k)}$. The inclusion $1\times G_2^{(k)}\leq (G_1\times G_2)^{(k)}$ can be proved analogously.

$\boxed{\leq}$ Let $x\in (G_1 \times G_2)^{(k)}$, so $\Omega_i^x=\Omega_i$. For $i=1,2$ put $x_i=x^{\Omega_i}$. Then $x$ coincides with $(x_1,x_2)\in\operatorname{Sym}(\Omega_1\cup\Omega_2)$, which is acting on $\Omega_i$ as $x_i$, $i=1,2$. We show that $x_i\in G_i^{(k)}$ for $i=1,2$. Let $\alpha_1,\ldots,\alpha_k\in\Omega_i$. By Theorem~\ref{W} there exist $(h_1,h_2)\in G_1\times G_2$ such that $\alpha_j^x=\alpha_j^{(h_1,h_2)}$, $j=1,\ldots,k$. In particular, $\alpha_j^{x_i}=\alpha_j^{h_i}$ in view of $\alpha_j^{x_i}=\alpha_j^x=\alpha_j^{(h_1,h_2)}=\alpha_j^{h_i}$. Then Theorem~\ref{W} implies that $x_i\in G_i^{(k)}$, and $x\in G_1^{(k)}\times G_2^{(k)}$ then.
\end{proof}

\begin{lemma}\label{DirProd}
If $G_i\leq\operatorname{Sym}(\Omega_i)$, $i=1,2$, and the group $G_1\times G_2$ acts on the direct product $\Omega_1\times\Omega_2$, then for every integer $k\geq 1$, $$(G_1\times G_2)^{(k)} = G_1^{(k)}\times G_2^{(k)}.$$
\end{lemma}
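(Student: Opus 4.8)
The plan is to follow the template of Lemma~\ref{DirSum}, proving the two inclusions separately, but with one crucial difference: here a permutation of $\Omega_1\times\Omega_2$ need not \emph{a priori} respect the product decomposition, so the real work lies in recovering the factorized form of an element of the closure. Throughout I write the product action as $(\alpha,\gamma)^{(h_1,h_2)}=(\alpha^{h_1},\gamma^{h_2})$.

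For the inclusion $\boxed{\geq}$ I would argue, as before, that it suffices to place $G_1^{(k)}\times 1$ and $1\times G_2^{(k)}$ inside $(G_1\times G_2)^{(k)}$. Fix $(g,1)\in G_1^{(k)}\times 1$ and arbitrary points $\beta_1,\ldots,\beta_k\in\Omega_1\times\Omega_2$, writing $\beta_i=(\alpha_i,\gamma_i)$. Since $g\in G_1^{(k)}$, Theorem~\ref{W} yields $h\in G_1$ with $\alpha_i^g=\alpha_i^h$ for all $i$; then $(h,1)\in G_1\times G_2$ satisfies $\beta_i^{(g,1)}=(\alpha_i^g,\gamma_i)=(\alpha_i^h,\gamma_i)=\beta_i^{(h,1)}$, so $(g,1)\in(G_1\times G_2)^{(k)}$ by Theorem~\ref{W}. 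The symmetric argument gives $1\times G_2^{(k)}\leq(G_1\times G_2)^{(k)}$, and since these two subgroups together generate $G_1^{(k)}\times G_2^{(k)}$, the inclusion follows. Note that, unlike in Lemma~\ref{DirSum}, no passage to a lower closure is needed, because every chosen point already carries both coordinates.

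For the inclusion $\boxed{\leq}$, take $x\in(G_1\times G_2)^{(k)}$. The heart of the proof, and the step I expect to be the main obstacle, is to show that $x$ factorizes: that there exist $x_1\in\Sym(\Omega_1)$ and $x_2\in\Sym(\Omega_2)$ with $(\alpha,\gamma)^x=(\alpha^{x_1},\gamma^{x_2})$ for all $\alpha,\gamma$. This is precisely where I would use $k\geq 2$. Fixing $\alpha\in\Omega_1$ and two elements $\gamma,\delta\in\Omega_2$, apply Theorem~\ref{W} to the pair of points $(\alpha,\gamma)$, $(\alpha,\delta)$ to obtain $(h_1,h_2)\in G_1\times G_2$ with $(\alpha,\gamma)^x=(\alpha^{h_1},\gamma^{h_2})$ and $(\alpha,\delta)^x=(\alpha^{h_1},\delta^{h_2})$; comparing first coordinates shows that the first coordinate of $(\alpha,\gamma)^x$ is independent of $\gamma$, which defines a map $x_1\colon\alpha\mapsto\alpha^{x_1}$. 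Running the symmetric argument over pairs $(\alpha,\gamma)$, $(\beta,\gamma)$ with $\gamma$ fixed produces $x_2$, and injectivity of $x$ forces both $x_1$ and $x_2$ to be bijections.

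Finally I would verify that $x_1\in G_1^{(k)}$ and $x_2\in G_2^{(k)}$. Given $\alpha_1,\ldots,\alpha_k\in\Omega_1$, fix any $\gamma\in\Omega_2$ and apply Theorem~\ref{W} to the points $(\alpha_i,\gamma)$: the resulting $(h_1,h_2)\in G_1\times G_2$ matches $x$ on all of them, so comparing first coordinates gives $\alpha_i^{x_1}=\alpha_i^{h_1}$ for every $i$, whence $x_1\in G_1^{(k)}$ by Theorem~\ref{W}; symmetrically $x_2\in G_2^{(k)}$, and therefore $x=(x_1,x_2)\in G_1^{(k)}\times G_2^{(k)}$. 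The only genuinely delicate point is the factorization step, and it is essentially a two-point argument: a single test coordinate cannot pin down one coordinate of the image independently of the other, so this route requires $k\geq 2$ (indeed the product version of the statement can fail for $k=1$, for instance when one factor is transitive of degree $2$ and the other is trivial, where the full closure permits fibrewise-independent symmetries that $G_1^{(1)}\times G_2^{(1)}$ does not).
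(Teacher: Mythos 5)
Your proof is correct and follows essentially the same route as the paper: the $\boxed{\geq}$ direction is identical, and your factorization step (applying Theorem~\ref{W} to two points sharing a first coordinate to see that the image's first coordinate is independent of the second) is the same two-point argument that the paper packages as the claim that $(G_1\times G_2)^{(2)}$ preserves the fibre partitions $\Sigma_1$ and $\Sigma_2$, after which both proofs finish by a $k$-point application of Theorem~\ref{W} in each coordinate. Your closing observation is also accurate: the argument genuinely requires $k\geq 2$, and the statement as printed for $k\geq 1$ fails at $k=1$ (your counterexample is valid), although the paper only ever invokes the lemma for $k\geq 2$.
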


\begin{proof}
$\boxed{\geq}$ It suffices to prove that $G_1^{(k)}\times 1$ and $1\times G_2^{(k)}$ are contained in $(G_1\times G_2)^{(k)}$. Let $(g,1)\in G_1^{(k)}\times G_2^{(k)}$, and $(\alpha_1,\beta_1),\ldots,(\alpha_k,\beta_k)\in\Omega_1\times\Omega_2$. Since $g\in G_1^{(k)}$, by Theorem~\ref{W} there exists $h\in G_1$ such that $\alpha_j^g=\alpha_j^h$, $j=1,\ldots,k$. This means that for the element $(h,1)\in G_1\times G_2$ the equality $(\alpha_j,\beta_j)^{(g,1)}=(\alpha_j^g,\beta_j)=(\alpha_j^h,\beta_j)=(\alpha_j,\beta_j)^{(h,1)}$ holds, and it follows from Theorem~\ref{W} that $(g,1)\in (G_1 \times G_2)^{(k)}$. So that, $G_1^{(k)}\times 1\leq (G_1\times G_2)^{(k)}$. Inclusion $1\times G_2^{(k)}\leq (G_1\times G_2)^{(k)}$ can be proved similarly.

$\boxed{\leq}$ First, we will study the structure of elements from $(G_1\times G_2)^{(2)}$. Put
$\Sigma_1=\{\{\alpha_1\}\times\Omega_2\mid\alpha_1\in\Omega_1\},$
$\Sigma_2=\{\Omega_1\times\{\alpha_2\}\mid\alpha_2\in\Omega_2\}.$
Obviously, the group $G_1 \times G_2$ acts on the sets $\Sigma_1$ and $\Sigma_2$. We will show that the group $(G_1 \times G_2)^{(2)}$ acts on these sets too, i.e. for all $x\in(G_1 \times G_2)^{(2)}$ and all $\Delta\in\Sigma_i$, $i=1,2$, either $\Delta^x=\Delta$, or $\Delta^x\cap \Delta=\varnothing$.

If $\Delta$ is a singleton, then there is nothing to proof. Suppose that there exist two different elements $u,v\in\Delta$ such that $u^x\in\Delta$, and $v^x\notin\Delta$. By Theorem~\ref{W}, there exists $h\in G_1 \times G_2$ such that $(u^x,v^x)=(u^h,v^h)$. But that would imply that $u^h\in\Delta$, and $v^h\notin\Delta$, which is impossible, because the group $G_1 \times G_2$ acts on the set $\Sigma_i$.

So the group $(G_1 \times G_2)^{(2)}$ acts on sets $\Sigma_1$ and $\Sigma_2$, and hence on the set $\Sigma_1\times\Sigma_2$. The bijection $$\rho:\Omega_1\times\Omega_2\rightarrow\Sigma_1\times\Sigma_2,\quad(\alpha_1,\alpha_2)\mapsto(\{\alpha_1\}\times\Omega_2,\Omega_1\times\{\alpha_2\}).$$
establishes a permutation isomorphism between the group $(G_1 \times G_2)^{(2)}$ and a subgroup of the group $\operatorname{Sym}(\Sigma_1\times\Sigma_2)$. For $i=1,2$ define the permutation $x_i$ acting on $\alpha\in\Omega_i$ in the following way:
\begin{center}
$\alpha^{x_1}=\beta\Leftrightarrow(\{\alpha\}\times\Omega_2)^x = \{\beta\}\times\Omega_2,$

$\alpha^{x_2}=\beta\Leftrightarrow(\Omega_1\times\{\alpha\})^x = \Omega_1\times\{\beta\}.$
\end{center}
Then the image of the permutation $x$ under the permutation isomorphism defined above coincides with the permutation $(x_1,x_2)\in\operatorname{Sym}(\Omega_1\times\Omega_2)$.

Let us return to the proof of the lemma. Let $x\in(G_1 \times G_2)^{(k)}$. From the previous observations and the inclusions~(\ref{ClosureSeries}) follows that $x=(x_1,x_2)$, $x_i\in\operatorname{Sym}(\Omega_i)$, $i=1,2$. We show that $x_i\in G_i^{(k)}$. Let $\alpha^i_1,\ldots,\alpha^i_k\in\Omega_i$. It follows from Theorem~\ref{W} that for $x\in (G_1 \times G_2)^{(k)}$ and for $(\alpha^1_1,\alpha^2_1),\ldots,(\alpha^1_k,\alpha^2_k)\in\Omega_1\times\Omega_2$ there exists $(h_1,h_2)\in G_1\times G_2$ such that $(\alpha^1_j,\alpha^2_j)^x=(\alpha^1_j,\alpha^2_j)^{(h_1,h_2)}$, $j=1,\ldots,k$. In particular, ${\alpha^i_j}^{x_i}={\alpha^i_j}^{h_i}$, $j=1,\ldots,k$, so Theorem~\ref {W} implies that $x_i\in G_i^{(k)}$, and $x\in G_1^{(k)}\times G_2^{(k)}$ then.
\end{proof}

The following technical lemma is of particular interest.

\begin{lemma}\label{SetwiseStabilizers}
Let $G$ be a finite nilpotent permutation group, $P\in\operatorname{Syl}(G)$, $\Delta_1,\ldots,\Delta_k\in \operatorname{Orb}(P)$, and $\Delta=\bigcup_{i=1}^k \Delta_i$. Then $$\left (\bigcap_{i=1}^k G_{\{\Delta_i\}}\right)^\Delta\leq P^\Delta.$$
\end{lemma}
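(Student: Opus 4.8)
The plan is to exploit the nilpotency of $G$ to peel off the Sylow subgroup $P$ as a direct factor, and then to reduce the claim to showing that the complementary $p'$-part acts trivially on $\Delta$.

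First I would write $G=P\times H$, where $P$ is the given Sylow $p$-subgroup and $H$ is the Hall $p'$-subgroup (the product of the remaining Sylow subgroups); this decomposition is available because $G$ is nilpotent. In particular $P$ is itself a Hall subgroup, so Lemma~\ref{CP1} applies with $\pi=\{p\}$: every orbit of $P$, and hence each $\Delta_i$, has cardinality $n_{\{p\}}$, a power of $p$, and moreover $G$ acts on $\operatorname{Orb}(P)$ with kernel exactly $P$. Because $P$ is this kernel, it lies in the stabilizer of every orbit, so $P\leq G_{\{\Delta_i\}}$ for each $i$. Combined with $G=P\times H$ this forces $G_{\{\Delta_i\}}=P\times H_i$, where $H_i:=G_{\{\Delta_i\}}\cap H$, and therefore $\bigcap_{i=1}^{k}G_{\{\Delta_i\}}=P\times H_0$ with $H_0:=\bigcap_{i=1}^{k}H_i$. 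At this point it suffices to prove that $H_0$ acts trivially on $\Delta$, since then $\left(\bigcap_i G_{\{\Delta_i\}}\right)^{\Delta}=(P\times H_0)^{\Delta}=P^{\Delta}$, which is even an equality.

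The heart of the argument is to show that each $H_i$ — and hence $H_0\leq H_i$ — acts trivially on $\Delta_i$. Here I would use that $P$ and $H_i$ commute inside $G$, so that the induced permutation groups $P^{\Delta_i}$ and $H_i^{\Delta_i}$ commute in $\operatorname{Sym}(\Delta_i)$; in other words $H_i^{\Delta_i}$ lies in the centralizer of $P^{\Delta_i}$. Since $P^{\Delta_i}$ is transitive on $\Delta_i$ (the set $\Delta_i$ is a $P$-orbit), its centralizer is semiregular, so $|H_i^{\Delta_i}|$ divides $|\Delta_i|=n_{\{p\}}$ and is thus a power of $p$. On the other hand $H_i^{\Delta_i}$ is a homomorphic image of the $p'$-group $H_i$, so its order is coprime to $p$; the two facts together give $H_i^{\Delta_i}=1$. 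Consequently $H_0$ fixes every point of each $\Delta_i$, whence $H_0^{\Delta}=1$ and the lemma follows.

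I expect the last paragraph to be the main obstacle. Everything hinges on the orbit lengths $|\Delta_i|$ being $p$-powers — which is precisely what Lemma~\ref{CP1} delivers once $P$ is recognized as a Hall subgroup — together with the standard fact that the centralizer of a transitive permutation group is semiregular. Feeding these into the coprimality of $|H_i|$ to $p$ is exactly what pins $H_i^{\Delta_i}$ down to the identity; verifying that $P^{\Delta_i}$ really is transitive and that the commuting relation between $P$ and $H_i$ survives restriction to $\Delta_i$ are the routine points to check carefully.
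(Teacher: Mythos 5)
Your proposal is correct and follows essentially the same route as the paper: decompose $G=P\times H$ with $H$ the Hall $p'$-subgroup, observe that the $H$-part of any stabilizing element induces on each $\Delta_i$ a permutation centralizing the transitive group $P^{\Delta_i}$, and use semiregularity of that centralizer together with $|\Delta_i|$ being a $p$-power and $H$ being a $p'$-group to conclude the $H$-part acts trivially on $\Delta$. Your bookkeeping via $G_{\{\Delta_i\}}=P\times H_i$ is a mild repackaging of the paper's elementwise factorization $g=xy$, and citing Lemma~\ref{CP1} for the orbit sizes is unnecessary (an orbit of the $p$-group $P$ trivially has $p$-power length), but neither point affects correctness.
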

\begin{proof} Let $g\in\left (\bigcap_{i=1}^k G_{\{\Delta_i\}}\right)^\Delta$. Since the group $G$ is nilpotent, $G=P\times H$, with $H$ being a Hall subgroup of $G$, and then $g=xy$ for some $x\in P$ and $y\in H$. The choice of $g$ and $x$ yields that $\Delta_i^g=\Delta_i^x=\Delta_i$ for all $i=1,\ldots,k$, so the same holds true for $y$, because $$\Delta_i=\Delta_i^g=\Delta_i^x=\Delta_i^{x^{-1}g}=\Delta_i^y.$$

We show that $y^{\Delta_i}=1_{\Delta_i}$ for all $i=1,\ldots,k$. Indeed, by the construction of $y$, the element $y^{\Delta_i}$ belongs to the centralizer $Z_i$ of the transitive group $P^{\Delta_i}\leq\operatorname{Sym}(\Delta_i)$, which is semiregular by~\cite[Exer.~4.5']{Wielandt1964}. Therefore $|Z_i|$ divides $|\Delta_i|$, which is a $p$-power. So that $Z_i$ is a~$p$-group. In particular, the order of the element $y^{\Delta_i} $ is $p$-power, and therefore $y^{\Delta_i}\in P^{\Delta_i}$. Since $P^{\Delta_i}\cap H^{\Delta_i}=1$, we have $y^{\Delta_i}=1_{\Delta_i}$.

Thus, $y^{\Delta}=1_{\Delta}$, which means that
$$
g^\Delta=(xy)^\Delta=x^\Delta y^\Delta=x^\Delta\in P^\Delta.
$$
\end{proof}

\newpage
\section{Proof of the theorem}\label{sec:Section3}

First, consider the case where the group $G$ is transitive. We use induction on $|\pi(G)|$. If $|\pi(G)|= 1$, then $G$ is a $p$-group, and by virtue of~\cite[Exer.~5.28]{WieInvRel} the $2$-closure $G^{(2)}$ is also a $p$-group, and due to the inclusions~(\ref{ClosureSeries}), $G^{(k)}$ is a $p$-group too for all~$k\geq2$.

Now let $G = P\times H$, where $P\in\operatorname{Syl}_p(G)$, and $H$ is the Hall subgroup of~$G$. Applying Lemma~\ref{CP1}(1) for orbits $\Delta\in\operatorname{Orb}(P)$ and $\Gamma\in\operatorname{Orb}(H)$, we obtain that
$$
|\Delta|=n_p \text{~and~} |\Gamma|=n_{p'},
$$
where $p'=\pi(H)$. Since $P$ and $H$ are normal subgroups of $G$, it follows that $ \Delta$ and $\Gamma$ are blocks of some imprimitivity system of the group~$G$. Moreover, the set $\Delta\cap\Gamma$ is either empty, or it is also a block whose order divides both~$|\Delta|$ and $|\Gamma|$, which means $|\Delta\cap\Gamma|\le 1$.\medskip

Now each point $\alpha\in\Omega$ can be uniquely associated with the orbits (containing this point) $\Delta_\alpha$ and $\Gamma_\alpha$ of the groups $P$ and $H$, respectively. Previous arguments imply that $|\Delta_\alpha\cap\Gamma_\alpha|=1$.

So the mapping
$$
\rho:\Omega\to \operatorname{Orb}(H)\times\operatorname{Orb}(\Gamma),\ \alpha\mapsto(\Delta_\alpha,\Gamma_\alpha)
$$
is a bijection. Denote by $P'$ and $H'$ the permutation groups induced by the action of $G$ on $\operatorname{Orb}(H)$ and $\operatorname{Orb}(P)$, respectively. By Lemma~\ref{CP1}(2) we obtain that
$$
P^\rho=P'\times 1\text{~and~} H^\rho=1\times H'.
$$
Thus, $G$ is permutationally isomorphic to the group $P'\times H'$ acting on the set $\operatorname{Orb}(H)\times\operatorname{Orb}(P)$. It follows from the Lemma~\ref{DirProd} that
$$
(P'\times H')^{(k)}=(P')^{(k)}\times (H')^{(k)}.
$$
The proof concludes by applying the induction assumption to the group $H$, which is possible by the equality $\pi(G)=\pi(P)\cup\pi(H)$.\medskip

Now let $G$ be intransitive. Each transitive constituent $H$ of $G$ is nilpotent (as a homomorphic image of a nilpotent group), and $\pi(H)\subseteq \pi (G)$. It follows from the previous arguments that the group $H^{(k)}$ is also nilpotent, and $\pi(H)=\pi(H^{(k)})$. Applying Lemma~\ref{DirSum} we obtain that
$$
G \le G^{(k)} \le \big( \prod_H H\big)^{(k)}=\prod_H H^{(k)},
$$
which implies the equality $\pi(G)=\pi(G^{(k)})$.\medskip

Now we consider the Sylow subgroups of $G$ and $G^{(k)}$.

\begin{lemma}\label{SylowSubgrpOrbits}
If $P\in\operatorname{Syl}_p(G)$ and $Q\in\operatorname{Syl}_p(G^{(k)})$, then $P^{(k)}\leq Q$ and $\operatorname{Orb}(P)=\operatorname{Orb}(Q)$.
\end{lemma}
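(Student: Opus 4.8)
The plan is to prove the two assertions in turn, leaning on the two facts already secured in the preceding paragraphs: that $G^{(k)}$ is nilpotent, and that $\pi(K)=\pi(K^{(k)})$ for every finite nilpotent permutation group $K$.

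\emph{The inclusion $P^{(k)}\le Q$.} Since $P\le G\le G^{(k)}$, every $k$-orbit of $G$ is a disjoint union of $k$-orbits of $P$; hence any permutation fixing each $k$-orbit of $P$ setwise also fixes each $k$-orbit of $G$ setwise, which gives the monotonicity $P^{(k)}\le G^{(k)}$. Applying the equality $\pi(K)=\pi(K^{(k)})$ to the $p$-group $K=P$ shows $\pi(P^{(k)})=\{p\}$, so $P^{(k)}$ is a $p$-subgroup of the nilpotent group $G^{(k)}$. As a nilpotent group has a unique Sylow $p$-subgroup, $Q$ absorbs every $p$-subgroup, whence $P^{(k)}\le Q$; in particular $P\le Q$.

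\emph{The equality $\Orb(P)=\Orb(Q)$.} First I would pin down the transitive constituents: from $G\le G^{(k)}\le G^{(1)}$ in~(\ref{ClosureSeries}) the partition $\Orb(G)$ refines $\Orb(G^{(k)})$, which refines $\Orb(G^{(1)})$, while $\Orb(G)=\Orb_1(G)=\Orb_1(G^{(1)})$ by the definition of the $1$-closure; so all three partitions coincide and $G,G^{(k)}$ have the same orbits. Consequently both $P$ and $Q$, being subgroups of $G^{(k)}$, leave each constituent $\Omega_0$ invariant, and it suffices to compare the orbits of $P$ and $Q$ constituent by constituent.

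Fix a constituent $\Omega_0$ of degree $n_0$. Its restrictions $G^{\Omega_0}$ and $(G^{(k)})^{\Omega_0}$ are transitive nilpotent groups, and $P^{\Omega_0}$, $Q^{\Omega_0}$ are Sylow $p$-subgroups of them, being images of Sylow subgroups under the restriction epimorphisms. By Lemma~\ref{CP1}(1), every orbit of $P^{\Omega_0}$ and every orbit of $Q^{\Omega_0}$ on $\Omega_0$ has size $(n_0)_p$. Since $P\le Q$ forces the orbits of $P^{\Omega_0}$ to refine those of $Q^{\Omega_0}$, the coincidence of orbit sizes makes each $Q$-orbit inside $\Omega_0$ a single $P$-orbit; ranging over all constituents gives $\Orb(P)=\Orb(Q)$. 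The one point demanding care is the verification that $Q^{\Omega_0}$ really is a Sylow $p$-subgroup of the \emph{transitive} group $(G^{(k)})^{\Omega_0}$, so that the hypotheses of Lemma~\ref{CP1} genuinely apply to the closure's restriction; once the identification $\Orb(G)=\Orb(G^{(k)})$ fixes the constituents, the remainder is the routine orbit-size comparison.
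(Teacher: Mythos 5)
Your proof is correct and follows essentially the same route as the paper: both establish that $P^{(k)}$ is a $p$-group sitting inside the nilpotent group $G^{(k)}$ (hence inside its unique Sylow $p$-subgroup $Q$), and both obtain $\operatorname{Orb}(P)=\operatorname{Orb}(Q)$ by restricting to a common transitive constituent and applying Lemma~\ref{CP1}(1) twice to force the $P$-orbit and the containing $Q$-orbit to have the same size $|\Gamma|_p$. The only cosmetic difference is that you derive the $p$-group property of $P^{(k)}$ from the previously established equality $\pi(K)=\pi(K^{(k)})$, whereas the paper cites the underlying exercise of Wielandt directly.
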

\begin{proof}
The inclusion $P^{(k)}\leq Q$ follows from~\cite[Exer.~5.28]{WieInvRel} and the inclusions~(\ref{ClosureSeries}). Therefore, to prove the equality $\operatorname{Orb}(P)=\operatorname{Orb}(Q)$ it suffices to prove that every $P$-orbit is a $Q$-orbit.

Let $\Delta\in\operatorname{Orb}(P)$ and $\Gamma\in\operatorname{Orb}(G)$ such that $\Delta\subseteq\Gamma$. Since the $k$-closure preserves $1$-orbits, $\Gamma$ is also an $G^{(k)}$-orbit. Denote by $\Delta'$ the orbit of the group $Q$ such that
$$
\Delta\subseteq \Delta'\subseteq\Gamma.
$$
The groups $G^\Gamma$ and $(G^{(k)})^{\Gamma}$ are transitive and nilpotent, so the double application of Lemma~\ref{CP1} implies $|\Delta|=|\Gamma|_p=|\Delta'|$, and $\Delta=\Delta'$.
\end{proof}

\begin{lemma}\label{SylowSubgrpClosure}
In above notations, $P^{(k)}=Q$.
\end{lemma}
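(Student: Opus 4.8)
The plan is to establish the reverse inclusion $Q\leq P^{(k)}$, since $P^{(k)}\leq Q$ is already supplied by Lemma~\ref{SylowSubgrpOrbits}; equality $P^{(k)}=Q$ then follows at once. Membership in $P^{(k)}$ I would test through the Wielandt criterion (Theorem~\ref{W}): fixing an arbitrary $q\in Q$ and arbitrary points $\alpha_1,\dots,\alpha_k\in\Omega$, it suffices to produce some $x\in P$ with $\alpha_i^x=\alpha_i^q$ for all $i=1,\dots,k$.

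The first move exploits that $q$ already lies in $G^{(k)}$, because $Q\in\operatorname{Syl}_p(G^{(k)})$. Applying Theorem~\ref{W} to $G^{(k)}$ produces some $g\in G$ with $\alpha_i^g=\alpha_i^q$ for every $i$. This $g$ need not lie in $P$, so the task is to replace it by an element of $P$ agreeing with it on the chosen points, and here Lemma~\ref{SetwiseStabilizers} is the tool. Put $\Delta_i=\alpha_i^P\in\operatorname{Orb}(P)$ and $\Delta=\bigcup_{i=1}^k\Delta_i$. By Lemma~\ref{SylowSubgrpOrbits} each $\Delta_i$ is also a $Q$-orbit, hence $\alpha_i^q\in\Delta_i$; since $P$ is normal in the nilpotent group $G$, its orbits form a block system permuted by $G$, and from $\alpha_i^g=\alpha_i^q\in\Delta_i$ one concludes $\Delta_i^g=\Delta_i$. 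Thus $g\in\bigcap_{i=1}^k G_{\{\Delta_i\}}$, and Lemma~\ref{SetwiseStabilizers} yields $g^\Delta\in P^\Delta$. Choosing $x\in P$ with $x^\Delta=g^\Delta$ gives $\alpha_i^x=\alpha_i^g=\alpha_i^q$ for all $i$, so $q\in P^{(k)}$ by Theorem~\ref{W}, completing the inclusion.

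The step I expect to be the main obstacle is precisely the verification that the auxiliary element $g$ — produced only to match $q$ on the finite tuple of points — stabilises each orbit $\Delta_i$ setwise; this is exactly what makes Lemma~\ref{SetwiseStabilizers} applicable and converts the rough ``$G$-approximation'' of $q$ into a genuine approximation inside $P$. It rests on two facts secured earlier in the argument: the normality of the Sylow subgroup $P$ in the nilpotent group $G$, so that $\operatorname{Orb}(P)$ is a block system and $g$ permutes the $\Delta_i$ among themselves, and the orbit equality $\operatorname{Orb}(P)=\operatorname{Orb}(Q)$ of Lemma~\ref{SylowSubgrpOrbits}, which forces $\alpha_i^q$ to land back in the same block $\Delta_i$. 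Once these are in place, the reduction to Lemma~\ref{SetwiseStabilizers} is immediate and the proof closes.
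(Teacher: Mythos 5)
Your proposal is correct and follows essentially the same route as the paper: approximate $q\in Q\leq G^{(k)}$ on the given tuple by some $g\in G$ via Theorem~\ref{W}, observe that $g$ stabilises each $P$-orbit $\Delta_i$ setwise (using $\operatorname{Orb}(P)=\operatorname{Orb}(Q)$ and the block structure coming from $P\trianglelefteq G$), and then invoke Lemma~\ref{SetwiseStabilizers} to replace $g$ by an element of $P$ agreeing with it on $\Delta=\bigcup_i\Delta_i$. If anything, your justification of the setwise-stabilisation step is slightly more explicit than the paper's.
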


\begin{proof} It was established in Lemma~\ref{SylowSubgrpOrbits} that $P^{(k)}\leq Q$. To prove the inverse inclusion, let $(\alpha_1,\ldots,\alpha_k)\in\Omega^k$ and $g\in Q$. By Theorem~\ref{W}, there exists $h\in G$ such that $$(\alpha_1,\ldots,\alpha_k)^g=(\alpha_1,\ldots,\alpha_k)^h.$$
Denote by $\Delta_i$ an orbit of $Q$ containing $\alpha_i$, $i=1,\ldots,k$. By Lemma~\ref{SylowSubgrpOrbits}, it follows that every such $\Delta_i$ is also an orbit of~$P$. The element $h$ fixes every $\Delta_i$ as a set, so $\alpha_i^h=\alpha_i^g\in\Delta_i$. In other words, $h\in\bigcap_{i=1}^k G_{\{\Delta_i\}}$, and by Lemma~\ref{SetwiseStabilizers} there exists $u\in P$ such that $h^\Delta=u^\Delta$ (here $\Delta=\bigcup_{i=1}^k \Delta_i$), and then $$(\alpha_1,\ldots,\alpha_k)^g=(\alpha_1,\ldots,\alpha_k)^h=(\alpha_1,\ldots,\alpha_k)^u.$$ It follows from Theorem~\ref{W} that $g\in P^{(k)}$ and $Q\le P^{(k)}$.
\end{proof}

The proof of the theorem is completed by the application of the Lemma~\ref{SylowSubgrpClosure} to the following equalities:
$$
G^{(k)} = \prod_{\operatorname{Syl}(G^{(k)})} Q = \prod_{\operatorname{Syl}(G)} P^{(k)}.\medskip
$$

Churikov Dmitry Vladimirovich

Sobolev Institute of Mathematics SB RAS, 4 Acad. Koptyug avenue,

Novosibirsk State University, 1, Pirogova str.

630090 Novosibirsk Russia

churikovdv@gmail.com

The work is supported by Mathematical Center in Akademgorodok under agreement No. 075-15-2019-1613 with the Ministry of Science and Higher Education of the Russian Federation.
\end{document}